\title[Pythagorean Triples]{Pythagorean Triples, Complex Numbers, Abelian
Groups and Prime Numbers}
\date{28 January 2021}
\author{Amnon Yekutieli}
\address{Department of  Mathematics,
Ben Gurion University, Be'er Sheva 84105, Israel}
\email{amyekut@math.bgu.ac.il}
\newtheorem{thm}[equation]{Theorem}
\newtheorem{cor}[equation]{Corollary}
\newtheorem{prop}[equation]{Proposition}
\newtheorem{lem}[equation]{Lemma}
\theoremstyle{definition}
\newtheorem{dfn}[equation]{Definition}
\newtheorem{rem}[equation]{Remark}
\newtheorem{exa}[equation]{Example}
\newtheorem{exer}[equation]{Exercise}
\newtheorem{que}[equation]{Question}
\numberwithin{equation}{section}
\newcommand{\iso}{\xrightarrow{%
\smash{\raisebox{-0.5ex}{\ensuremath{\scriptstyle \simeq  \mspace{2mu}}}}}}
\newcommand{\sub}{\subseteq}
\newcommand{\opn}{\operatorname}
\newcommand{\cd}{\mspace{1.3mu}{\cdot}\mspace{1.3mu}}
\newcommand{\rmitem}[1]{\item[\text{\textup{(#1)}}]}
\newcommand{\mbf}[1]{\mathbf{#1}}
\newcommand{\mrm}[1]{\mathrm{#1}}
\newcommand{\mbb}[1]{\mathbb{#1}}
\newcommand{\Ga}{\Gamma}
\newcommand{\ga}{\gamma}
\newcommand{\ep}{\epsilon}
\newcommand{\ze}{\zeta}
\newcommand{\R}{\mathbb{R}}
\newcommand{\Q}{\mathbb{Q}}
\newcommand{\Z}{\mathbb{Z}}
\newcommand{\C}{\mathbb{C}}
\newcommand{\tup}[1]{\textup{#1}}
\newcommand{\bsym}[1]{\boldsymbol{#1}}
\newcommand{\til}[1]{\tilde{#1}}
\renewcommand{\i}{\bsym{i}}
\newcommand{\abs}[1]{\lvert #1 \rvert}
\begin{document}

\begin{abstract}
It is well-known that pythagorean triples can be represented by points of the 
unit circle with rational coordinates. These points form an abelian group, and 
we describe its structure. This structural description yields, almost 
immediately, an enumeration of the normalized pythagorean triples with a 
given hypotenuse, and also to an effective method for producing all such 
triples. This effective method seems to be new. 

This paper is intended for the general mathematical audience, including 
undergraduate mathematics students, and therefore it contains plenty of 
background material, some history and several examples and 
exercises.
\end{abstract}

\maketitle

\tableofcontents


\section{Pythagorean Triples}
 
A {\em pythagorean triple} is a triple $(a, b, c)$ of positive integers
satisfying the equation 
\begin{equation} \label{eqn:1}
a^2 + b^2 = c^2 . 
\end{equation}
The reason for the name is, of course, because of the Pythagoras Theorem, which 
says that the sides of a right angled triangle, with base $a$, height $b$ and  
hypotenuse $c$, satisfy this equation.
See Figure \ref{fig:100}. 

\begin{figure}[ht]
\includegraphics[scale=0.14]{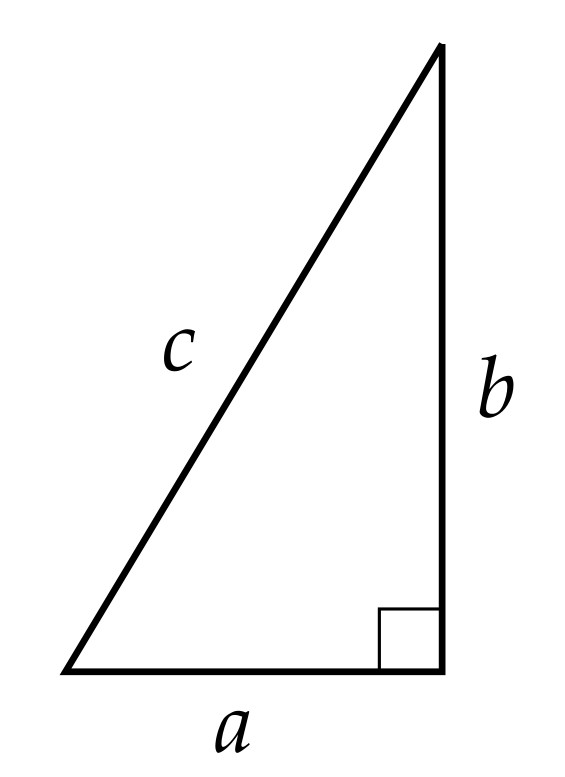}
\caption{Right angled triangle, with base $a$, height $b$ and hypotenuse $c$.}
\label{fig:100}
\end{figure}

We say that two triples $(a, b, c)$ and $(a', b', c')$ are 
{\em equivalent} if the corresponding triangles are similar. 
Numerically this means that there is a positive number $r$, such that  
\[ (a', b', c') = (r \cd a, r \cd b, r \cd c) \]
or 
\[ (a', b', c') = (r \cd b, r \cd a, r \cd c) . \]
See Figure \ref{fig:101}. 
Clearly the number $r$ is rational. 

\begin{figure}[ht]
\includegraphics[scale=0.3]{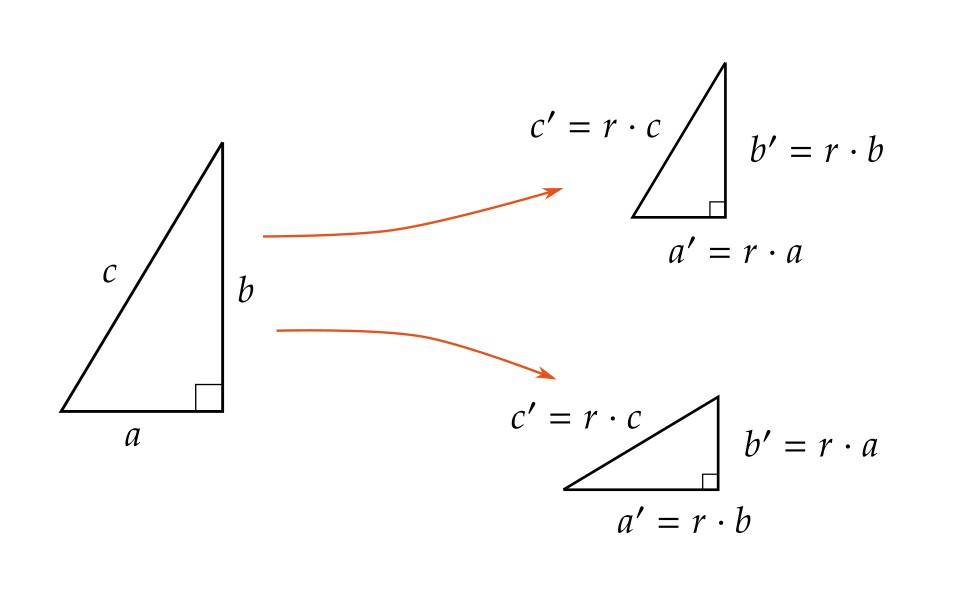}
\caption{Similar right angled triangles.}
\label{fig:101}
\end{figure}

\begin{dfn} \label{dfn:100}
We say that a pythagorean triple $(a, b, c)$ is {\em normalized} if the 
greatest common divisor of these three numbers is $1$, and $a \leq b$. 
\end{dfn}

It is easy to see that every pythagorean triple $(a, b, c)$ is equivalent to 
exactly one normalized triple $(a', b', c')$. 
For this reason we shall be mostly interested in normalized pythagorean triples.
 
\begin{exer} \label{exer:100}
Let $(a, b, c)$ be a normalized pythagorean triple. Show that $c$ is odd, and 
$a < b$. 
\end{exer}

Here is an interesting question:
 
\begin{que} \label{que:100}
Are there infinitely many normalized pythagorean triples?
\end{que}
 
The answer is {\em yes}. This fact was already known to the ancient Greeks. 
There is a formula attributed to Euclid for presenting all pythagorean 
triples, and it proves that there are infinitely many normalized triples.
This formula is somewhat clumsy, and I will not display it. It can be found in 
many sources, including \cite[Chapter 3]{Tk}, or online at \cite{Wi1} or 
\cite{Wo}. Later we will give a geometric argument showing that there are 
infinitely many normalized pythagorean triples. As explained in Remark 
\ref{rem:115}, this geometric argument secretly relies on Euclid's formula.

\begin{dfn} \label{dfn:101}
Let us denote by $\opn{PT}$ the set of all normalized pythagorean triples,
and for each integer $c > 1$ let $\opn{PT}_c$ be the set of normalized 
pythagorean triples with hypotenuse $c$. 
\end{dfn}

Thus we obtain a partition $\opn{PT} = \coprod_{c > 1} \opn{PT}_c$. 
A restatement of Question \ref{que:100} is this: Is the set  $\opn{PT}$
infinite? The next obvious question is:

\begin{que} \label{que:140}
For which $c$ is the set $\opn{PT}_c$ nonempty?
\end{que}

Exercise \ref{exer:100} shows that $\opn{PT}_c = \varnothing$ if $c$ is 
even. Next is a quantitative variant of Question \ref{que:140}. 

\begin{que} \label{que:102} 
What is the size of the set $\opn{PT}_c$~{\hspace{-0.4ex}}?
\end{que}
 
The answer to this question was found in the 19th century, by Gauss. 
We will see it later in the article, in Corollary \ref{cor:31}. 
An even more interesting question is the next one.
 
\begin{que} \label{que:103} 
Given $c$, is there an {\em effective} way to find the elements of 
$\opn{PT}_c$~{\hspace{-0.4ex}}?
\end{que}

An effective method will be presented below, in Theorem \ref{thm:31}.

\section{From Pythagorean Triples to the Rational Unit Circle}
 
It was observed a long time ago that pythagorean triples can be encoded as 
{\em complex numbers on the unit circle}. 
 
Starting from a normalized pythagorean triple 
$(a, b, c)$, we pass to the complex number 
\begin{equation} \label{eqn:110}
z := a + b \cd \i , 
\end{equation}
which has absolute value 
$\abs{z} = \sqrt{a^2 + b^2} = c$. 
Next we introduce the complex number
\begin{equation} \label{eqn:2}
\ze = s + t \cd \i := \frac{z}{\abs{z}} = 
\rfrac{a}{c} + \rfrac{b}{c} \cd \, \i .
\end{equation}
The number $\ze$ has rational coordinates, and it is on the unit circle, in the 
second octant. See Figure \ref{fig:108}.
We can recover the number $z$, and thus the normalized pythagorean triple 
$(a, b, c)$, by clearing the denominators from the pair of rational numbers 
$(s, t) = (\frac{a}{c}, \frac{b}{c})$.
 
\begin{figure}
\includegraphics[scale=0.15]{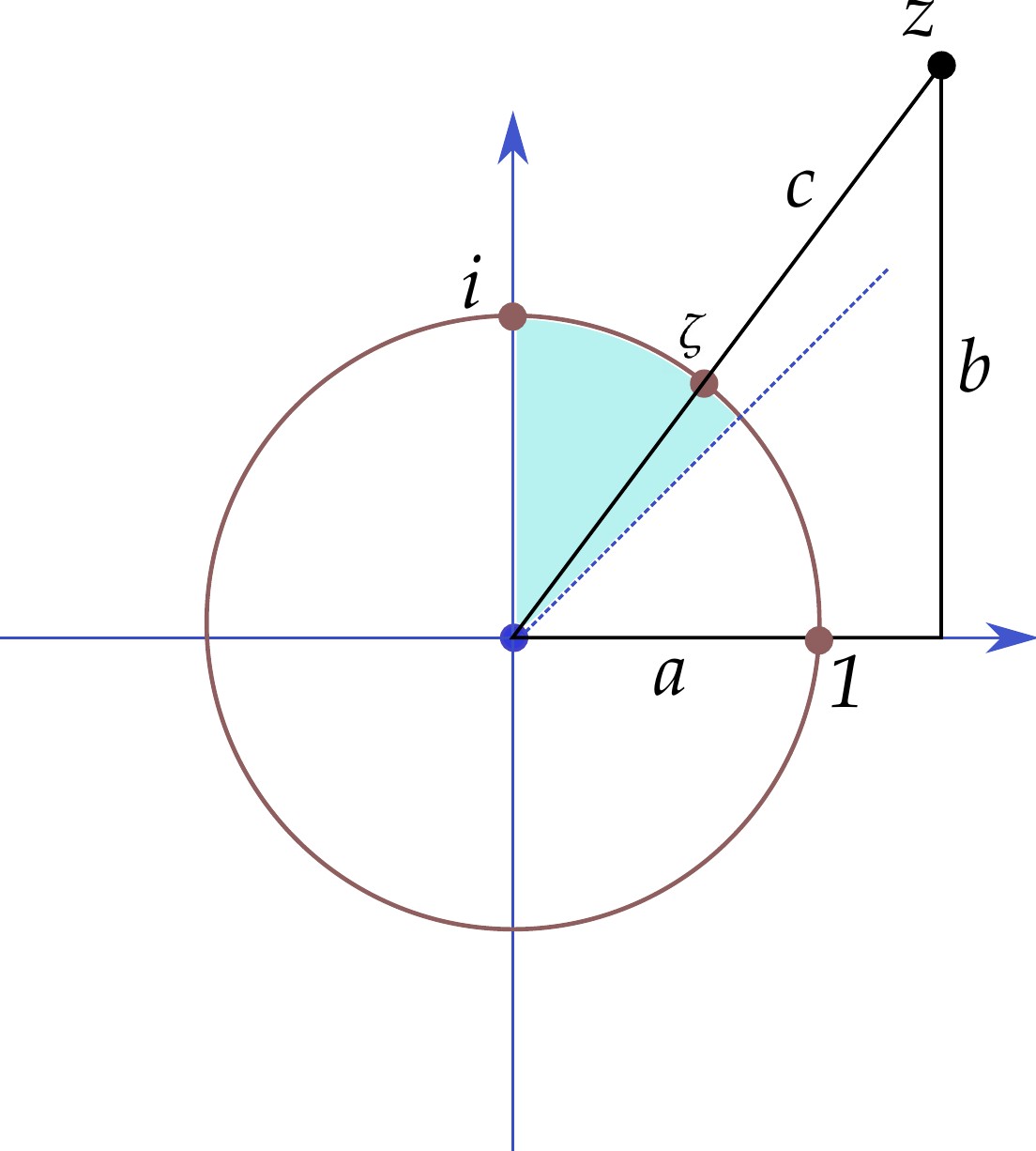}
\caption{The number $\ze$ in the second octant of the unit circle.}
\label{fig:108}
\end{figure}

Actually, there are $8$ different points on the unit circle that encode the 
same pythagorean triple:
\begin{equation} \label{eqn:40}
\pm \ze, \, \pm \i \cd \ze, \, \pm \bar{\ze}, \, \pm \i \cd \bar{\ze} .
\end{equation}
See Figure \ref{fig:103}. 
These points can be obtained from $\ze$ as follows. 
Let $\Ga$ be the group of symmetries of the circle generated by the two 
operations $\ze \mapsto \i \cd \ze$ and $\ze \mapsto \bar{\ze}$.  
This is a nonabelian group of order $8$ (a dihedral group), and we shall call 
it the {\em group of pythagorean symmetries of the 
circle}. The points in (\ref{eqn:40}) are the orbit of the point $\ze$ 
under the action of the group $\Ga$. 
 
\begin{figure}[ht]
\includegraphics[scale=0.15]{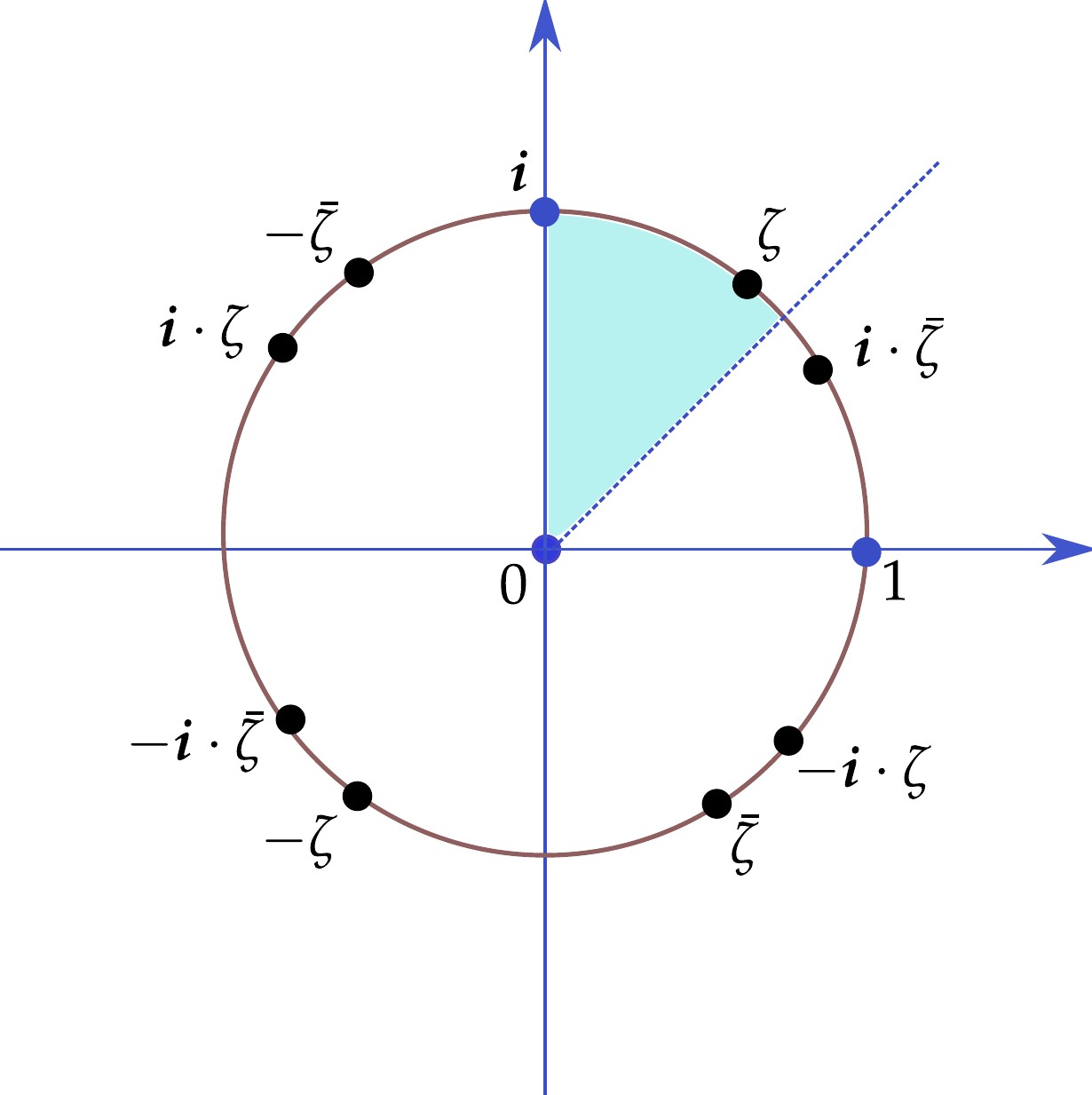}
\caption{The number $\ze$ in the second octant, and its orbit under the action 
of the group $\Ga$ of pythagorean symmetries of the circle.}
\label{fig:103}
\end{figure}

\begin{dfn} \label{dfn:110}
Given a complex number $\ze = s + t \cd \i$ on the unit circle,
with rational coordinates $(s, t)$, and which does not belong to 
$\{ \pm 1, \pm \i \}$, let us denote by $\opn{pt}(\ze)$ the unique 
normalized pythagorean triple $(a, b, c)$ that $\ze$ encodes.
\end{dfn}

In other words, given $\ze$, we first move it to the second octant by an 
element of the group $\Ga$. For $\ze$ in the second octant we have  
$\opn{pt}(\ze) = (a, b, c)$
as in formula (\ref{eqn:2}). 

The function $\opn{pt}$ is a surjection from the set of points
on the unit circle with rational coordinates, excluding the 
four special points $\{ \pm 1, \pm \i \}$, to the set $\opn{PT}$ of normalized 
pythagorean triples. The fibers of the function $\opn{pt}$ are the orbits 
of the group $\Ga$, and each fiber has cardinality $8$. 
 
Let's summarize what we have established so far: 

\begin{prop} \label{prop:115}
The following assertions are equivalent:
\begin{itemize}
\rmitem{i} There are infinitely many normalized pythagorean triples.

\rmitem{ii} There are infinitely many points on the unit circle with rational 
coordinates.
\end{itemize}
\end{prop}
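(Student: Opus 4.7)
The plan is to leverage the surjection $\opn{pt}$ together with the fact, already noted in the discussion preceding the proposition, that its fibers are precisely the $\Ga$-orbits, each of size exactly $8$. Write $U$ for the set of rational points on the unit circle and $U' := U \setminus \{\pm 1, \pm \i\}$, so that $\opn{pt} \colon U' \twoheadrightarrow \opn{PT}$ is a surjection with all fibers of cardinality $8$. The four excluded points do not affect infiniteness of $U$ versus $U'$.

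For (ii)$\Rightarrow$(i), I would argue as follows: if $U$ is infinite, then $U'$ is infinite too (having removed only four points), and a finite set cannot be the surjective image of a set mapped onto it only through fibers of bounded finite size; more precisely, if $\opn{PT}$ were finite, then $U' = \bigsqcup_{(a,b,c)\in\opn{PT}} \opn{pt}^{-1}(a,b,c)$ would be a finite disjoint union of $8$-element sets, hence finite, a contradiction.

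For (i)$\Rightarrow$(ii), conversely, if $\opn{PT}$ is infinite, then choosing one preimage in $U'$ for each element of $\opn{PT}$ (via any section of $\opn{pt}$) exhibits infinitely many distinct elements of $U'$, hence of $U$.

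There is no serious obstacle here; the proposition is essentially a bookkeeping consequence of having a surjection with uniformly finite fibers. The only small point to keep straight is the distinction between $U$ and $U'$, which is handled by the trivial observation that removing four points from an infinite set leaves an infinite set.
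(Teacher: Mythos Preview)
Your argument is correct and matches the paper's approach: the proposition is stated there as a summary of the preceding discussion, which sets up exactly the surjection $\opn{pt}$ from the rational circle minus $\{\pm 1,\pm \i\}$ onto $\opn{PT}$ with $\Ga$-orbits of size $8$ as fibers. Your explicit spelling-out of both implications via this finite-fiber surjection is precisely what the paper leaves implicit.
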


Here is a geometric proof of assertion (ii) in the proposition above. 
Let us denote the unit circle by 
$\mbf{S}^1$. The stereographic projection with focus at $\i$ is the bijective 
function  
$f : \mbf{S}^1 - \{ \i \} \to \mbb{R}$,
which sends the point $\ze \in \mbf{S}^1 - \{ \i \}$ to the unique point 
$f(\ze) \in \R$ that lies on the straight line connecting $\i$ and $\ze$. See 
Figure \ref{fig:110}.
 
\begin{figure}[t]
\includegraphics[scale=0.18]{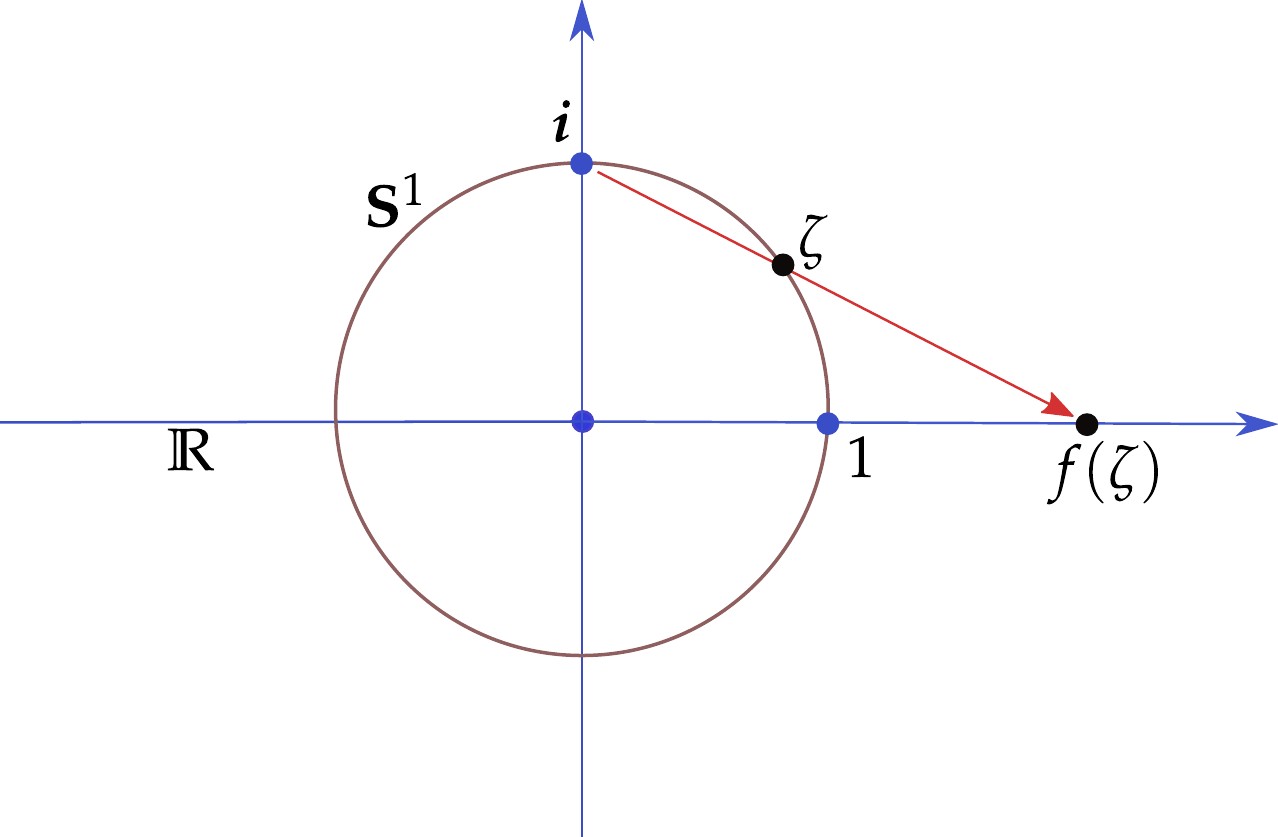}
\caption{The stereographic projection with focus at $\i$.}
\label{fig:110}
\end{figure}

\begin{exer} \label{exer:3}
Show that the point $\ze \in \mbf{S}^1 - \{ \i \}$ has rational coordinates iff 
the number $f(\ze) \in \R$ is rational. (Hint: Use similar triangles.)
\end{exer}
 
Since there are infinitely many rational numbers, we are done. 
This answers Question \ref{que:100} positively. 

\begin{rem} \label{rem:115} 
Here is an algebro-geometric explanation why the stereographic projection $f$ 
sends rational points to rational points. This remark can be safely ignored by 
readers not familiar with the theory of schemes. 

Consider the affine plane over $\Q$, which is the affine scheme
$\mbf{A}^2_{\Q} = \opn{Spec}(\Q[s, t])$. 
Let $X \sub \mbf{A}^2_{\Q}$
be the closed subscheme whose ideal is generated by the polynomial 
$s^2 + t^2 - 1$. The set $X(\R)$ of $\R$-valued points of $X$ is the circle
$\mbf{S}^1$, and the set $X(\Q)$ is precisely the set of points in $\mbf{S}^1$ 
with rational coordinates. 

Let $X' \sub X$ be the open subscheme 
defined by the nonvanishing of the polynomial $t - 1$. 
Let $f : X' \to \mbf{A}^1_{\Q} = \opn{Spec}(\Q[s])$
be the map of affine $\Q$-schemes 
whose formula is $f^*(s) := s / (1 - t)$. 
On $\R$-valued points the function 
\[ f : X'(\R) = \mbf{S}^1 - \{ \i \} \to \mbf{A}^1(\R) = \R \]
is the stereographic projection. 
It turns out that $f$ is an isomorphism of $\Q$-schemes, and the formula for 
its inverse $g : \mbf{A}^1_{\Q} \to X'$ involves the classical expressions of 
Euclid. See \cite{Wi1}. 
Since $f : X' \to \mbf{A}^1_{\Q}$ is an isomorphism of $\Q$-schemes, it 
induces a bijection $f : X'(\Q) \to \mbf{A}^1(\Q) \cong \Q$.
\end{rem}

\section{The Rational Unit Circle as an Abelian Group}
 
Previously we used the notation $\mbf{S}^1$ for the unit circle. 
We will now switch to another notation, which is is better suited for our 
purposes. From now on we shall write
\begin{equation} \label{eqn:115}
G(\R) := \mbf{S}^1 = \{ \ze \in \C \, \mid \, \abs{\ze} = 1 \} .
\end{equation}
Note that the set $G(\R)$ is a group whose operation is complex multiplication,
because 
$\abs{\ze_1 \cd \ze_2} =  \abs{\ze_1} \cd \abs{\ze_2}$
and $\abs{\ze^{-1}} = \abs{\ze}^{-1}$.

Let { $G(\Q)$  be the subset of $G(\R)$ consisting of points with 
rational coordinates}; namely 
\begin{equation} \label{eqn:20}
 G(\Q) = \{ \zeta = s + t \cd \i 
\, \mid \, s, t \in \mbb{Q},  \, s^2 + t^2 = 1 \} .
\end{equation}

\begin{exer} \label{exer:4}
Prove that $G(\Q)$ is a subgroup of $G(\R)$.
(Hint: Inspect the formulas for multiplication and inversion of complex 
numbers.)
\end{exer}

In Remark \ref{rem:116} we explain what lies behind this choice of 
new notation. 

Recall that to answer Question \ref{que:100}, namely to show there are 
infinitely many normalized pythagorean triples, it suffices to 
prove that the abelian group $G(\Q)$ is infinite. This is by Proposition 
\ref{prop:115}. 

We first identify all the elements of finite order in the group $G(\Q)$. These 
are the roots of $1$, namely the elements $\ze \in G(\Q)$ satisfying 
$\ze^n = 1$ for some positive integer $n$. 
Algebraic number theory tells us that there are just four of them:
$1, \i, -1, -\i$.
See \cite[Section 11.2]{Ar} or \cite[Section 5.2]{Tk}.  
(All the results from algebraic number theory that we need can be found in these 
books.)

This means that if we take any element 
$\ze \in G(\Q) - \{ \pm 1, \pm \i \}$, 
the cyclic subgroup that $\ze$ generates, namely the set  
$\{ \ze^n \mid  n \in \Z \} \sub G(\Q)$, 
will be infinite!

Let us consider the familiar normalized pythagorean triple
$(3, 4, 5)$. The corresponding element in $G(\Q)$, by formulas 
(\ref{eqn:110}) and (\ref{eqn:2}), is 
\begin{equation} \label{eqn:118}
\ze :=  \rfrac{3}{5} + \rfrac{4}{5} \cd \i ,
\end{equation}
and it does not belong to $\{ \pm 1, \pm \i \}$. 
So this element $\ze$ has infinite order in the group $G(\Q)$.

This provides us with a second way to answer Question \ref{que:100} 
affirmatively. But we also get, almost for free, a whole list of new normalized 
pythagorean triples! See the first positive powers of $\ze$, and the 
corresponding triples, in Figure \ref{fig:115}. 
The reader is encouraged to verify that these are indeed pythagorean triples. 
It is clear that they are normalized, since the numbers $a_n$ and $b_n$ are not 
divisible by $5$. 
 
\begin{figure}
\renewcommand{\arraystretch}{1.5}
\begin{tabular}[c]{|c|c|c|}
\hline
$n$ & $\zeta^n$ & $\opn{pt}(\zeta^n) = (a_n ,b_n ,c_n)$ \\
\hline \hline
$1$ & $\rfrac{3}{5} + \rfrac{4}{5} \cd \i$ & $(3,4,5)$ \\
\cline{1-3}
$2$ & $-\rfrac{7}{25} + \rfrac{24}{25} \cd \i$ & $(7, 24, 25)$ \\
\cline{1-3}
$3$ & $-\rfrac{117}{125} + \rfrac{44}{125} \cd \i$ & $(44, 117, 125)$ \\
\cline{1-3}
$4$ & $-\rfrac{527}{625}  - \rfrac{336}{625} \cd \i$ & $(336, 527, 625)$ \\
\hline
\end{tabular}
\caption{A list of normalized pythagorean triples.}
\label{fig:115}
\end{figure}

\begin{exer} \label{exer:5}
Find a normalized pythagorean triple with hypotenuse $c = 3125$.
(Later we will see that there is only one.)
\end{exer}
 
\begin{rem} \label{rem:116}
Here is another bit of algebraic geometry, which is not required for 
understanding this paper (yet it did play a role in the discovery of the 
results).

The unit circle, which in Remark \ref{rem:115} was viewed as an affine scheme 
$X$, can also be viewed as an affine group scheme $G$ over $\Q$, namely the 
group $G := \opn{SO}_2$. 
The equality $G(\R) = \mbf{S}^1$ recovers the group structure of the circle. 
Looking at the situation this way, it is clear that 
$G(\Q)$ is a subgroup of $G(\R)$. 
\end{rem}

\begin{rem} \label{rem:117}
After giving a colloquium talk on this topic a few years ago, I was informed 
that the connection between pythagorean triples and the group 
$G(\Q)$ was already observed by O. Tausski \cite{Ts} in 1970. An 
inspection of that paper shows that such a connection was made; yet a full 
understanding of the situation seems to be absent from that paper. In 
particular, the paper \cite{Ts} does not touch the question of enumeration 
of pythagorean triples (cf.\ Corollary \ref{cor:31}), nor does it give an 
effective method for their computation (cf.\ Theorem \ref{thm:31} below). Even a 
list such as in Figure \ref{fig:115} does not appear in that paper. 
\end{rem}

\section{Prime Numbers and the Abelian Group Structure}

We have seen that the pythagorean triples are represented by the 
points of the unit circle with rational coordinates, that we now 
denote by $G(\Q)$. The main result of this section is Theorem \ref{thm:135},
which describes the structure of the abelian group $G(\Q)$ in terms of 
prime numbers. Getting there requires a few steps,
including an understanding of the irreducible elements of the {\em ring of 
Gauss integers}
\begin{equation} \label{eqn:127}
\Z[\i] := \{ m + n \cd \i \, \mid \, m, n \in \Z \} \sub \C  . 
\end{equation}
All facts we use here can be found in \cite[Section 11.5]{Ar} and 
\cite[Section 5.4]{Tk}. 

When we talk about {\em prime integers} we mean {\em positive} prime integers. 
Let's introduce some notation. The set of prime integers is denoted 
by $P$. It is partitioned into 
\begin{equation} \label{eqn:140}
P = P_1 \sqcup P_2 \sqcup P_3 , 
\end{equation}
where 
\begin{equation} \label{eqn:141}
P_i := \bigl\{ p \in P \mid p \equiv i \ (\opn{mod} 4) \bigr\} . 
\end{equation}
Explicitly, 
$P = \{ 2, 3, 5, \ldots \}$, 
$P_1 = \{ 5, 13, 17, \ldots \}$,
$P_2 = \{ 2 \}$ and $P_3 = \{ 3, 7, 11, \ldots \}$.
It is known that the sets $P_1$ and $P_3$ are infinite, but this fact is 
not important for us. 

Our first step is to study the prime numbers $p \in P_1$. 
It turns out that such a prime $p$ can be written as a sum of two 
squares of integers: 
\begin{equation} \label{eqn:122}
p = m^2 + n^2 .
\end{equation}
Because $p$ is odd, we must have $\abs{m} \neq \abs{n}$. 
Therefore, without loss of generality, we can assume that 
$0 < m < n$. Let us define the complex number 
\begin{equation} \label{eqn:123}
q := m + n \cd \i  . 
\end{equation}
Its conjugate is then 
\begin{equation} \label{eqn:126}
\bar{q} = m - n \cd \i . 
\end{equation}
Their product is 
\begin{equation} \label{eqn:124}
q \cd \bar{q} = (m + n \cd \i) \cd (m - n \cd \i) = 
m^2 + n^2  = p .
\end{equation}
We shall be interested in the quotient 
\begin{equation} \label{eqn:125}
\ze_p := q / \bar{q} \in \C . 
\end{equation}
An easy calculation shows that $\ze_p$ has rational coordinates and absolute 
value $1$; thus $\ze_p \in G(\Q)$. 

It might appear that the numbers $q$ and $\bar{q}$ above depend on our choice 
of $m$ and $n$. However, by the classification of the irreducible elements of 
the ring $\Z[\i]$, which we are going to recall below, it follows that there is 
exactly one irreducible divisor $q$ of $p$ in the ring $\Z[\i]$ that sits in 
the second octant, and this is the number $q$ in formula (\ref{eqn:123}). 

\begin{exa} \label{exa:125}
Take the prime $p = 5$. It satisfies 
$5 = 1^2 + 2^2$, so by our convention above we have 
$m = 1, n = 2$, $q = 1 + 2 \cd \i$ and $\bar{q} = 1 - 2 \cd \i$. 
The resulting element of $G(\Q)$ is 
$\ze_5 = q / \bar{q} = - \rfrac{3}{5} + \rfrac{4}{5} \cd \i$.

Note that $\ze_5$ does not coincide with the number 
$\ze = \rfrac{3}{5} + \rfrac{4}{5} \cd \i$
from equation (\ref{eqn:118}). However, they are in the same orbit under the 
action of the group $\Ga$ of pythagorean symmetries of the circle: 
$\ze_5 = - \bar{\ze}$. So they represent the same normalized pythagorean 
triple, which is $(3, 4, 5)$. 
\end{exa}

Our next step is to study some properties of the ring $\Z[\i]$. 
It is known that this ring is a {\em principal ideal domain}, and 
therefore it is a {\em unique factorization domain}.
Let $U$ be the group of invertible elements of $\Z[\i]$, and let 
$Q$ be a complete set of irreducible elements of $\Z[\i]$; to be precise, we 
choose one representative $q \in Q$ from every coset $U \cd q$ of irreducible 
elements. Every nonzero element $z \in \Z[\i]$ has a unique factorization
\begin{equation} \label{eqn:120}
z = u \cd \prod_{i = 1, \ldots, k} \, q_i^{e_i} 
\end{equation}
where $u \in U$, $k \geq 0$, $(q_1, \ldots, q_k)$
is a sequence of distinct elements of $Q$, and the multiplicities are
$e_i \geq 1$. The uniqueness of the factorization (\ref{eqn:120}) is up to a 
permutation of the sequence $(1, \ldots, k)$.

The group of invertible elements of $\Z[\i]$ is 
\begin{equation} \label{eqn:121}
U = \{ 1, \i, -1, -\i \} . 
\end{equation}
The irreducible elements of $\Z[\i]$ are of three types, according to the 
partition (\ref{eqn:140}) of $P$. 
\begin{itemize}
\rmitem{$P_1$} For every prime integer $p \in P_1$, the numbers $q$ and 
$\bar{q}$ from formulas (\ref{eqn:123}) and (\ref{eqn:126}) are irreducible in 
$\Z[\i]$, and they are not equivalent, namely $U \cd q \neq U \cd \bar{q}$. 

\rmitem{$P_2$} The number $1 + \i$ is irreducible in $\Z[\i]$.

\rmitem{$P_3$} Every prime integer $p \in P_3$ is irreducible in $\Z[\i]$.
\end{itemize}

In the next definition we are going to select a particular set of 
representatives $Q$ of the irreducible elements of $\Z[\i]$, according to the 
three types above. 

\begin{dfn} \label{dfn:142}
Define the complete set of irreducible elements $Q$ of $\Z[\i]$ to be
\[ Q := Q_1 \sqcup \bar{Q}_1 \sqcup Q_2 \sqcup Q_3  \]
where:
\begin{enumerate}
\item For every $p \in P_1$, the element $q$ from (\ref{eqn:123})
associated to $p$ belongs to $Q_1$, and the element $\bar{q}$ from 
(\ref{eqn:126}) belongs to $\bar{Q}_1$.  These are all the elements in 
$Q_1 \sqcup \bar{Q}_1$.

\item $Q_2 := \{ 1 + \i \}$. 

\item $Q_3 := P_3$. 
\end{enumerate}
\end{dfn}

Note that the functions $p \mapsto q$ and $p \mapsto \bar{q}$,
from formulas (\ref{eqn:123}) and (\ref{eqn:126}) respectively, 
are bijections $P_1 \iso Q_1$ and $P_1 \iso \bar{Q}_1$. 

It will be important to know the absolute values of the elements of $Q$. 
An element $q \in Q_1$, and its conjugate 
$\bar{q} \in \bar{Q}_1$, have 
$\abs{q} = \abs{\bar{q}} = \sqrt{p}$, where $p = q \cd \bar{q} \in P_1$. 
The element $q = 1 + \i \in Q_2$ has $\abs{q} = \sqrt{2}$.
And an element $q = p \in Q_3 = P_3$ has $\abs{q} = p$.  

The field of fractions of $\Z[\i]$ is 
\begin{equation} \label{eqn:128}
\Q[\i] := \{ s + t \cd \i \, \mid \, s, t \in \Q \} \sub \C  . 
\end{equation}
Like (\ref{eqn:120}), every nonzero element $z \in \Q[\i]$ has a unique 
factorization
\begin{equation} \label{eqn:129}
z = u \cd \prod_{i = 1, \ldots, k} \, q_i^{e_i} 
\end{equation}
where $u \in U$, $k \geq 0$, $(q_1, \ldots, q_k)$
is a sequence of distinct elements of $Q$, but now the multiplicities $e_i$ are 
nonzero integers. 

As shown in formula (\ref{eqn:125}), to each $p \in P_1$ we assign a number 
$\ze_p \in G(\Q)$. In this way we obtain a collection 
$\{ \ze_p \}_{p \in P_1}$ of elements of $G(\Q)$. 

\begin{thm} \label{thm:135}
The abelian group $G(\Q)$ decomposes into a product 
\[ G(\Q) = U \times F , \]
where $U = \{ \pm 1, \pm \i \}$, and $F$ is a free abelian group with basis the 
collection $\{ \ze_{p} \}_{p \in P_1}$. 
\end{thm}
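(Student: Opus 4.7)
The plan is to identify $G(\Q)$ with the kernel of the norm map $N \colon \Q[\i]^\times \to \Q^\times$ defined by $N(z) := z \cd \bar{z}$, and then to compute this kernel using the unique factorization (\ref{eqn:129}) of elements of $\Q[\i]^\times$ in terms of the basis $Q = Q_1 \sqcup \bar{Q}_1 \sqcup Q_2 \sqcup Q_3$. This identification is immediate from the definition (\ref{eqn:20}): $\ze = s + t \cd \i$ lies in $G(\Q)$ exactly when $s, t \in \Q$ and $s^2 + t^2 = 1$, which in complex notation is the single equation $\ze \cd \bar{\ze} = 1$.

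First I would take an arbitrary $\ze \in \Q[\i]^\times$ and write its unique factorization in the form
\[ \ze = u \cd (1 + \i)^{e_2} \cd \prod_{p \in P_1} q_p^{a_p} \cd \bar{q}_p^{b_p} \cd \prod_{p \in P_3} p^{c_p} , \]
where $u \in U$, the exponents $e_2, a_p, b_p, c_p \in \Z$ are almost all zero, and $q_p \in Q_1$ is the element associated to $p \in P_1$. I would then conjugate, using that conjugation swaps $Q_1$ and $\bar{Q}_1$, that $\overline{1 + \i} = -\i \cd (1 + \i)$, and that $\bar{p} = p$ for $p \in P_3$, to obtain the factorization of $\bar{\ze}$ in the same basis.

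Next I would multiply to form $\ze \cd \bar{\ze}$ and impose $\ze \cd \bar{\ze} = 1$. By uniqueness of the factorization (\ref{eqn:129}), the exponent of each irreducible must vanish, and reading these off term by term gives $2 e_2 = 0$, $a_p + b_p = 0$ for every $p \in P_1$, and $2 c_p = 0$ for every $p \in P_3$. The unit part collapses to $u \cd \bar{u} = 1$, which holds for all four elements of $U$. Substituting these constraints and using the definition $\ze_p = q_p / \bar{q}_p$ from (\ref{eqn:125}), I obtain
\[ \ze = u \cd \prod_{p \in P_1} \ze_p^{a_p} , \]
with the pair $\bigl( u, (a_p)_{p \in P_1} \bigr)$ uniquely determined by $\ze$, again by uniqueness in (\ref{eqn:129}). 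This is exactly the internal direct product decomposition $G(\Q) = U \times F$ with $F$ free abelian on the collection $\{ \ze_p \}_{p \in P_1}$.

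The main subtlety to keep an eye on is the $Q_2$ contribution: because $\overline{1 + \i}$ differs from $1 + \i$ by the unit $-\i$, the unit part of $\ze \cd \bar{\ze} = 1$ a priori couples $u \cd \bar{u}$ with $(-\i)^{e_2}$. Fortunately the $(1 + \i)$-part of $\ze \cd \bar{\ze}$ is $(1 + \i)^{2 e_2}$, which already forces $e_2 = 0$ before the unit equation is examined, so the spurious $-\i$ disappears and the argument proceeds cleanly.
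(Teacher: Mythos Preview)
Your proof is correct and follows essentially the same approach as the paper's: write the unique factorization (\ref{eqn:129}) of an element of $G(\Q)$, impose the unit-circle condition, and read off the exponent constraints. The only cosmetic difference is that you impose the condition as $\ze \cd \bar{\ze} = 1$ inside $\Q[\i]^\times$ (which forces you to track the unit-part subtlety you correctly flagged), whereas the paper imposes it as $\abs{\ze}^2 = 1$ and reads off the constraints from unique factorization of the positive rational $\abs{\ze}^2$ into integer primes, thereby bypassing the unit term altogether.
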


\begin{proof}
We begin by noting that $G(\Q) = \Q[\i] \cap G(\R)$, or in other words
\[ G(\Q) = \{ z \in \Q[\i] \mid \abs{z} = 1 \} . \]

Take an element $z \in G(\Q)$. Being a nonzero element of $\Q[\i]$, it has its 
unique factorization (\ref{eqn:129}). It will be useful for us to alter the 
factorization (\ref{eqn:129}), by inserting more factors from the set $Q$ of 
irreducible elements, with multiplicities $0$, and then to rearrange to 
product. 
The new factorization is this:
\begin{equation} \label{eqn:137}
z = u \cd (1 + \i)^c \cd r_1^{d_1} \cdots r_l^{d_l} \cd 
(q_1^{e_1} \cd \bar{q}_1^{\, e'_1}) \cdots 
(q_k^{e_k} \cd \bar{q}_k^{\, e'_k}) .
\end{equation}
Here $u \in U$; $r_1, \ldots, r_l$ are distinct elements of $Q_3$; 
$q_1, \ldots, q_k$ are distinct elements of $Q_1$; 
and $\bar{q}_1, \ldots, \bar{q}_k \in \bar{Q}_1$ are the conjugates of the 
$q_i$, in the same order.  
The multiplicities 
$c, d_1, \ldots, d_l, e_1, \ldots, e_k, e'_1, \ldots, e'_k$ 
are allowed to be $0$.  

We now examine what the condition $\abs{z} = 1$ imposes on the factorization 
(\ref{eqn:137}). Recall that $\abs{u} = 1$, $\abs{1 + \i} = \sqrt{2}$,
$\abs{r_i} = r_i$, and 
$\abs{q_i} = \abs{\bar{q}_i} = \sqrt{p_i}$, where
$p_i = q_i \cd \bar{q}_i$. It is better to work with $z^2$. We get:
\begin{equation} \label{eqn:138}
1 = \abs{z^2} = 1 \cd 2^c \cd 
r_1^{2 \cd d_1} \cdots r_l^{2 \cd d_l} \cd 
(p_1^{e_1} \cd p_1^{e'_1}) \cdots (p_k^{e_k} \cd p_k^{e'_k}) . 
\end{equation}
Because the integer primes 
$2, r_1, \ldots, r_l, p_1, \ldots, p_k$
are all distinct, we conclude that $c = 0$, $d_i = 0$ and 
$e'_i = -e_i$. This means that in the product (\ref{eqn:137}) we can erase all 
the factors that belong to $Q_2 \cup Q_3$, and also all the  
factors $q_i$ and $\bar{q}_i$ that belong to $Q_1 \cup \bar{Q}_1$ whose 
multiplicities are 
$e_i = e'_i = 0$. Next, for every $i$ such that $e_i \neq 0$ we have 
$q_i^{e_i} \cd \bar{q}_i^{\, e'_i} = 
q_i^{e_i} \cd \bar{q}_i^{\, -e_i} = \ze_{p_i}^{e_i}$, 
see formula (\ref{eqn:125}). 
Therefore, after renumbering the remaining factors in 
(\ref{eqn:137}), and setting a new value for $k$, we obtain the factorization
\begin{equation} \label{eqn:143}
z = u \cd \ze_{p_1}^{e_1} \cdots \ze_{p_k}^{e_k} 
\end{equation}
with $u \in U$, $k \geq 0$, $p_1, \ldots, p_k$ distinct elements of 
$P_1$, and the multiplicities $e_i$ are nonzero integers. 
The factorization (\ref{eqn:143}) is unique up to a permutation of 
$(1, \ldots, k)$. This establishes the decomposition 
$G(\Q) = U \times F$. 
\end{proof}

\begin{rem} \label{rem:135}
Theorem \ref{thm:135} is related to the easy form of Hilbert's Theorem 
90, which says that every element $\ze \in G(\Q)$ satisfies 
$\ze = z / \bar{z}$ for some $z \in \Q[\i]$. See \cite{Wi2}. 
\end{rem}

\section{Back to Pythagorean Triples}

Recall that $G(\Q)$ is the group of points on the unit circle with rational 
coordinates, $U = \{ \pm 1, \pm \i \} \sub G(\Q)$,
and for every point 
$\ze \in G(\Q) - U$ we write $\opn{pt}(\ze)$ for the 
corresponding normalized pythagorean triple. The set of integer 
primes congruent to $1$ modulo $4$ is denoted by $P_1$. For each 
$p \in P_1$ we assigned the element $\ze_p \in G(\Q)$, see (\ref{eqn:125}).
 
\begin{lem} \label{lem:135}
Let $k$ be a positive integer, let $p_1, \ldots, p_k$ be distinct primes in 
$P_1$, let $n_1, \ldots, n_k$ be nonzero integers, let 
$\ep_1, \ldots, \ep_k \in \{ \pm 1 \}$, and let 
\[ (a, b, c) := 
\opn{pt} \bigl( \ze_{p_1}^{\ep_1 \cd n_1} \cdots 
\ze_{p_k}^{\ep_k \cd  n_k} \bigr) \in \opn{PT} . \]
Then $c = p_1^{n_1} \cdots p_k^{n_k}$.
\end{lem}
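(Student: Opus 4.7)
The plan is to compute, for every irreducible element $q$ in the canonical set $Q$ of Definition~\ref{dfn:142}, the $q$-adic valuation of $\ze := \ze_{p_1}^{\ep_1 n_1} \cdots \ze_{p_k}^{\ep_k n_k}$ in two different ways---once directly from this product expression, and once from its realization $\ze' := \tau(\ze) = (a + b \cd \i)/c$ after moving it to the second octant by some $\tau \in \Ga$---and then to read off $c$ by comparison. Here $v_q$ denotes the exponent of $q$ in the unique factorization~(\ref{eqn:129}) of elements of $\Q[\i]^{\times}$.

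For the first computation, since $\ze_{p_i} = q_i/\bar{q}_i$ with $q_i \in Q_1$, and since the $q_i, \bar{q}_i$ (for the distinct $p_i \in P_1$) are pairwise non-associate irreducibles in $\Z[\i]$, I can read off immediately $v_{q_i}(\ze) = \ep_i \cd n_i$, $v_{\bar{q}_i}(\ze) = -\ep_i \cd n_i$, and $v_q(\ze) = 0$ for every other $q \in Q$. The group $\Ga$ is generated by multiplication by the unit $\i \in \Z[\i]$ (which preserves each $v_q$) and by complex conjugation (which swaps $v_q$ with $v_{\bar{q}}$); so the same analysis applied to $\ze'$ yields $\abs{v_{q_i}(\ze')} = n_i$ for each $i$, and $v_q(\ze') = 0$ for every other $q \in Q$.

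The second computation, which I expect to be the main obstacle, rests on the coprimality $\gcd(a + b \cd \i,\, a - b \cd \i) = 1$ in $\Z[\i]$. My plan is to prove it in two steps: any common divisor must divide both $2a$ and $2b \cd \i$, hence divides $2 \cd \gcd(a,b) = 2$ (where $\gcd(a,b) = 1$ follows from primitivity $\gcd(a,b,c) = 1$ together with $a^2 + b^2 = c^2$); and the only nontrivial irreducible divisor of $2$ in $\Z[\i]$, namely $1 + \i$, cannot divide $a + b \cd \i$, because this would force $a \equiv b \pmod{2}$, contradicting the oddness of $c$ from Exercise~\ref{exer:100} (which forces exactly one of $a, b$ to be even). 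Once coprimality is in place, the identity $(a + b \cd \i)(a - b \cd \i) = c^2$ gives $v_q(a + b \cd \i) + v_q(a - b \cd \i) = 2 \cd v_q(c)$ with one summand zero, so $v_q(a + b \cd \i) \in \{ 0, 2 \cd v_q(c) \}$; consequently $\abs{v_q(\ze')} = \abs{v_q(a + b \cd \i) - v_q(c)} = v_q(c)$ for every $q \in Q$.

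Comparing the two expressions for $\abs{v_q(\ze')}$ yields $v_{q_i}(c) = n_i$ for each $i$ and $v_q(c) = 0$ for every other $q \in Q$. Since $c$ is a positive integer, and these $\Z[\i]$-adic valuations determine its ordinary prime factorization (with $v_{q_i}(c) = v_{\bar{q}_i}(c)$ equal to the $p_i$-adic valuation, $v_{1 + \i}(c) = 0$ corresponding to $c$ being odd, and $v_r(c) = 0$ for $r \in P_3$ meaning $r \nmid c$), the conclusion $c = \prod_{i=1}^{k} p_i^{n_i}$ follows.
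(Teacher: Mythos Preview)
Your argument is correct and takes a genuinely different route from the paper's. The paper works ``forward'': it posits $c' := p_1^{n_1} \cdots p_k^{n_k}$ at the outset, forms $z := c' \cd \ze = \til{q}_1^{\,2 n_1} \cdots \til{q}_k^{\,2 n_k} \in \Z[\i]$ (with $\til{q}_i = q_i$ or $\bar{q}_i$ according to $\ep_i$), writes $z = a' + b' \cd \i$, and checks that $(a', b', c')$ is a reduced pythagorean triple by arguing that no $p_i$ can divide both $a'$ and $b'$ (else both $q_i$ and $\bar{q}_i$ would divide $z$, contradicting its irreducible factorization). You instead work ``backward'' from the normalized triple $(a,b,c)$: the coprimality $\gcd(a + b \cd \i,\, a - b \cd \i) = 1$ in $\Z[\i]$ lets you read off $\abs{v_q(\ze')} = v_q(c)$ for every $q \in Q$, and matching this against the valuations of $\ze$ (which survive the $\Ga$-action up to the swap $v_{q_i} \leftrightarrow v_{\bar{q}_i}$) recovers the factorization of $c$. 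The paper's approach is more constructive --- the identity $z = \prod_i \til{q}_i^{\,2 n_i}$ is precisely the effective recipe used in Theorem~\ref{thm:31} and Example~\ref{exa:140} --- whereas yours derives $c$ rather than guessing it, and isolates cleanly the single arithmetic fact about primitive triples that drives the lemma, namely that $a + b \cd \i$ and its conjugate share no irreducible factor in $\Z[\i]$. One small remark: your line $\abs{v_{q_i}(\ze')} = n_i$ tacitly uses $n_i > 0$; strictly from the hypotheses you get $\abs{n_i}$, which of course suffices for the intended conclusion.
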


\begin{proof}
Let's write 
$\ze :=  \ze_{p_1}^{\ep_1 \cd n_1} \cdots \ze_{p_k}^{\ep_k \cd  n_k}$. 
For every $i$ define
\[ \til{q}_i := 
\begin{cases}
q_i & \tup{if} \ \ \ep_i = 1
\\
\bar{q}_i & \tup{if} \ \ \ep_i = -1 . 
\end{cases} \]

Consider the numbers $c' := p_1^{n_1} \cdots p_k^{n_k}$
and $z := c' \cd \ze$. 
For every $i$ we have 
\[ p_i^{n_i} \cd \ze_{p_i}^{\ep_i \cd n_i} = 
(q_i \cd \bar{q}_i)^{n_i} \cd  (q_i / \bar{q}_i)^{\ep_i \cd n_i} = 
\til{q}_i^{\, 2 \cd n_i} , \]
and therefore 
\begin{equation} \label{eqn:150}
z = \til{q}_1^{\, 2 \cd n_1} \cdots \til{q}_k^{\, 2 \cd n_k} .
\end{equation}
We see that $z \in \Z[\i]$, so we can express it uniquely as 
$z = a' + b' \cd \i$ with $a', b' \in \Z$. 
Since $\ze \notin U$ and $z = c' \cd \ze$, it follows that 
$\bar{z} \neq z$ and $\bar{z} \neq -z$, and therefore $a'$ 
and $b'$ are nonzero. Moreover, $\abs{z} = c'$. We conclude that $(a', b', c')$ 
is a pythagorean triple. 

We claim that the triple $(a', b', c')$ is reduced, namely the greatest common 
divisor of these three numbers in $\Z$ is $1$. The prime divisors of $c'$ in 
$\Z$ are $p_1, \ldots, p_k$. Suppose some $p_i$ divides both $a'$ and $b'$.
Then $a' = a'' \cd p_i$ and $b' = b'' \cd p_i$ for some $a'', b'' \in \Z$.  
This will give $z = p_i \cd (a'' + b'' \cd \i)$ in $\Z[\i]$. 
But both irreducibles $q_i$ and $\bar{q}_i$ divide $p_i$ in $\Z[\i]$, and  
this implies that $q_i$ and $\bar{q}_i$ both divide $z$. This is in 
contradiction to the decomposition (\ref{eqn:150}) of $z$ into irreducibles.

At this point we know that either $(a', b', c')$ or $(b', a', c')$ is a 
normalized pythagorean triple, and this is the triple 
$(a, b, c) = \opn{pt}(\ze)$. In any case $c = c'$. 
\end{proof}

\begin{thm} \label{thm:31}  
Let $c$ be an integer greater than $1$, with prime decomposition 
\[ c = p_1^{n_1} \cdots p_k^{n_k}  \]
in $\Z$. Here $p_1, \ldots, p_k$ are distinct prime integers;
$n_1, \ldots, n_k$ are positive integers; and $k$ is a positive integer. 
\begin{enumerate} 
\item If $p_i \equiv 1 \, (\opn{mod} 4)$ for every index $i$, then the function
$\opn{pt}$ restricts to a bijection 
\[ \opn{pt} : 
\bigl\{ \, \ze_{p_1}^{n_1} \cd \ze_{p_2}^{\ep_2 \cd n_2} \cdots 
\ze_{p_k}^{\ep_k \cd n_k} \, \mid \, \ep_2, \ldots, \ep_k \in \{ \pm 1 \} \,
\bigr\} \iso \opn{PT}_c . \]
Here $\ze_{p_i}$ is the number defined in formula (\ref{eqn:125}) 
for the prime $p_i$. 
 
\item Otherwise, the set $\opn{PT}_c$ is empty. 
\end{enumerate}
\end{thm}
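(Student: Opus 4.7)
The plan is to derive both parts from the structural Theorem \ref{thm:135} (giving $G(\Q) = U \times F$ with $F$ free abelian on $\{ \ze_p \}_{p \in P_1}$) together with Lemma \ref{lem:135} (which pins down the hypotenuse of $\opn{pt}(\ze)$ when $\ze$ is written as a product of the $\ze_{p_i}^{\pm n_i}$).

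I would dispose of part (2) first, by contraposition. If $(a, b, c) \in \opn{PT}_c$, then $\ze := a/c + (b/c) \cd \i$ lies in $G(\Q) - U$, and Theorem \ref{thm:135} expresses it uniquely as $\ze = u \cd \ze_{q_1}^{e_1} \cdots \ze_{q_m}^{e_m}$ with distinct $q_j \in P_1$ and nonzero integers $e_j$. Writing $e_j = \ep_j \cd \abs{e_j}$ and invoking Lemma \ref{lem:135}, the hypotenuse of $\opn{pt}(\ze) = (a, b, c)$ equals $\prod_j q_j^{\abs{e_j}}$. By unique prime factorization in $\Z$, every prime divisor of $c$ then lies in $P_1$, so the presence of a single $p_i \notin P_1$ forces $\opn{PT}_c = \varnothing$.

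For part (1), I would first verify well-definedness: each element of the indicated set lies in $F \setminus \{ 1 \} \sub G(\Q) - U$ (using $n_1 \geq 1$ and freeness of $F$), and Lemma \ref{lem:135} gives hypotenuse $p_1^{n_1} \cdots p_k^{n_k} = c$. Surjectivity follows the same route as part (2): starting from $(a, b, c) \in \opn{PT}_c$ I form $\ze$, extract the factorization from Theorem \ref{thm:135}, and combine Lemma \ref{lem:135} with unique prime factorization to force the primes appearing to be exactly $p_1, \ldots, p_k$, with the absolute value of each exponent equal to $n_i$. Since $\opn{pt}$ is constant on the $\Ga$-orbit of $\ze$, I then normalize: absorb the $U$-component $u$ via the action $\ze \mapsto \i^{-j} \ze$, and if the resulting exponent of $\ze_{p_1}$ is $-n_1$, apply conjugation $\ze \mapsto \bar{\ze}$, which (since $\abs{\ze_p} = 1$) inverts every exponent and so flips $-n_1$ into $+n_1$.

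The main obstacle will be injectivity, which is where the asymmetric convention $\ep_1 = +1$ earns its keep. If two elements $\ze_{p_1}^{n_1} \cd \prod_{i \geq 2} \ze_{p_i}^{\ep_i n_i}$ and $\ze_{p_1}^{n_1} \cd \prod_{i \geq 2} \ze_{p_i}^{\ep'_i n_i}$ share an image under $\opn{pt}$, they belong to a common $\Ga$-orbit, so one equals either $\i^j$ times the other or $\i^j$ times the conjugate of the other. In the first case, matching $U$- and $F$-components under $G(\Q) = U \times F$ and invoking freeness of $F$ on the $\ze_p$ forces $\ep_i = \ep'_i$ for all $i \geq 2$. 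In the second case, conjugation inverts every exponent, so matching $F$-components yields an equation in $F$ whose $\ze_{p_1}$-coordinate reads $2 n_1 = 0$, contradicting $n_1 \geq 1$. This rules out collapsing distinct sign patterns under $\opn{pt}$ and completes the proof.
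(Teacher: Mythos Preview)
Your proof is correct and follows essentially the same route as the paper: both arguments rest on Theorem~\ref{thm:135} and Lemma~\ref{lem:135}, and both exploit that the $\Ga$-orbits on $F-\{1\}$ are exactly the pairs $\{\ze,\bar\ze\}$, with the convention $\ep_1=+1$ selecting one representative per orbit. The only difference is packaging---the paper passes through the quotient bijection $\opn{pt}:(F-\{1\})/\Ga_0 \iso \opn{PT}$ and then restricts, whereas you verify well-definedness, surjectivity, and injectivity by hand; the underlying computations (matching $U$- and $F$-components, reading off exponents via freeness, and using conjugation to flip signs) are identical.
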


\begin{proof}
Consider the set $G(\Q) - U$, the complement of the subgroup $U$ in the group 
$G(\Q)$. We know that the function 
\begin{equation} \label{eqn:165}
\opn{pt} : G(\Q) - U \to \opn{PT} 
\end{equation}
is surjective, and its fibers are orbits of the group $\Ga$ of pythagorean 
symmetries. Thus, passing to the quotient set, we get a bijection 
\begin{equation} \label{eqn:166}
\opn{pt} : \bigl( G(\Q) - U \bigr) / \Ga \iso  \opn{PT} . 
\end{equation}

Let $\Ga_0$ be the subgroup of $\Ga$ of order $2$ generated by the complex 
conjugation $\ga : z \mapsto \bar{z}$. Then $\Ga = \Ga_0 \ltimes U$, a 
semi-direct product. 
According to Theorem \ref{thm:135} there is a group decomposition 
$G(\Q) = U \times F$, where $F$ is a free abelian group with basis the 
collection of elements $\{ \ze_p \}_{p \in P_1}$. The action of the 
group $U$ on the set $G(\Q) - U$ is such that it  induces a bijection 
\begin{equation} \label{eqn:167}
\bigl( G(\Q) - U \bigr) / U \iso F - \{ 1 \} ,
\end{equation}
and this bijection respects the actions of $\Ga_0$. 
Hence we can pass from the bijection (\ref{eqn:166}) to the bijection 
\begin{equation} \label{eqn:151}
\opn{pt} : \bigl( F - \{ 1 \} \bigr) / \Ga_0 \iso  \opn{PT} .
\end{equation}

Now let us fix a positive integer $k$, 
distinct primes  $p_1, \ldots, p_k$ in $P_1$, and  
positive integers $n_1, \ldots, n_k$.
Consider the set 
\[ Z := \bigl\{ \, \ze_{p_1}^{\ep_1 \cd n_1}  \cdots 
\ze_{p_k}^{\ep_k \cd  n_k} \, \mid \ep_1, \ldots, 
\ep_k \in \{ \pm 1 \} \, \bigr\} .  \]
It is a subset of $F - \{ 1 \}$, stable under the action of $\Ga_0$.
Indeed, the conjugation $\ga$ acts by $\ep_i \mapsto -\ep_i$. 
Hence, if we let $Z' \sub Z$ be the subset corresponding to $\ep_1 = 1$, 
the restricted function
$\opn{pt} : Z' \to \opn{PT}$
is injective.
Lemma \ref{lem:135} says that the image $\opn{pt}(Z')$ is contained in 
$\opn{PT}_c$, where 
$c := p_1^{n_1} \cdots p_k^{n_k}$.
The same lemma says that for any element 
$\ze \in F - \{ 1 \}$ that does not belong to $Z$,  $\opn{pt}(\ze)$ is not in 
$\opn{PT}_c$. The conclusion is that the function 
$\opn{pt} : Z' \to \opn{PT}_c$ is bijective. This proves item (1) of the 
theorem. 

As for item (2): Since only numbers $c =  p_1^{n_1} \cdots p_k^{n_k}$
with $p_i \in P_1$ occur as hypotenuses in the image of the bijection 
(\ref{eqn:151}), the subsets $\opn{PT}_c$ are empty for numbers $c$ that are 
not of this kind. 
\end{proof}
 
\begin{cor} \label{cor:31}  
Let $c$ be an integer $> 1$, with prime decomposition as in 
Theorem \ref{thm:31}.
\begin{enumerate} 
\item If $p_i \equiv 1 \, (\opn{mod} 4)$ for every index $i$, then the number 
of normalized pythagorean triples with hypotenuse $c$ is $2^{k - 1}$. 
 
\item Otherwise, there are no normalized pythagorean triples with 
hypotenuse $c$.
\end{enumerate}
\end{cor}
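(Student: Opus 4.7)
The plan is to deduce the corollary almost immediately from Theorem \ref{thm:31} by counting the cardinality of the indexing set of the bijection. Part (2) is already contained verbatim in Theorem \ref{thm:31}(2), so the real content is part (1).

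For part (1), I would invoke the bijection
\[ \opn{pt} :
\bigl\{ \, \ze_{p_1}^{n_1} \cd \ze_{p_2}^{\ep_2 \cd n_2} \cdots
\ze_{p_k}^{\ep_k \cd n_k} \, \mid \, \ep_2, \ldots, \ep_k \in \{ \pm 1 \} \,
\bigr\} \iso \opn{PT}_c \]
from Theorem \ref{thm:31}(1), and simply count the size of the set on the left. The parameters $(\ep_2, \ldots, \ep_k)$ range over $\{ \pm 1 \}^{k-1}$, which has $2^{k-1}$ elements, so I would only need to verify that distinct choices of $(\ep_2, \ldots, \ep_k)$ produce distinct elements of $G(\Q)$.

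This injectivity step is the one place any argument is needed, but it is immediate from Theorem \ref{thm:135}: the group $F$ is free abelian with basis $\{ \ze_p \}_{p \in P_1}$, and the exponent of $\ze_{p_i}$ in the product $\ze_{p_1}^{n_1} \cd \ze_{p_2}^{\ep_2 \cd n_2} \cdots \ze_{p_k}^{\ep_k \cd n_k}$ is $\ep_i \cd n_i$ (with $\ep_1 := 1$). Since each $n_i > 0$, the exponent $\ep_i \cd n_i$ determines $\ep_i$, so two different sign tuples yield elements of $F$ with different coordinates in the basis $\{ \ze_p \}_{p \in P_1}$, hence different elements of $G(\Q)$. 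Therefore the indexing set has exactly $2^{k-1}$ elements, and by the bijection so does $\opn{PT}_c$.

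There is no real obstacle here; the conceptual work has all been done in Theorems \ref{thm:135} and \ref{thm:31}, and the corollary is a straightforward counting consequence. The only thing worth flagging is the edge case $k = 1$, where the product degenerates to $\ze_{p_1}^{n_1}$ and the count is $2^0 = 1$, which correctly reflects that a prime power hypotenuse $c = p^n$ with $p \equiv 1 \pmod{4}$ admits exactly one normalized pythagorean triple.
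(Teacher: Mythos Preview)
Your proposal is correct and matches the paper's own proof almost exactly: the paper simply states that the set $Z'$ in Theorem \ref{thm:31}(1) has cardinality $2^{k-1}$ and then invokes the bijection, while you add the explicit (and entirely appropriate) justification via Theorem \ref{thm:135} that distinct sign tuples yield distinct elements of $F$. Your treatment of part (2) and the edge case $k=1$ are also fine.
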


\begin{proof}
(1) The set
\[ Z' =
\bigl\{ \, \ze_{p_1}^{n_1} \cd \ze_{p_2}^{\ep_2 \cd n_2} \cdots 
\ze_{p_k}^{\ep_k \cd n_k} \, \mid \, \ep_2, \ldots, \ep_k \in \{ \pm 1 \} \,
\bigr\}  \]
has cardinality $2^{k - 1}$, and item (1) of the theorem says that the function 
$\opn{pt} : Z' \to \opn{PT}_c$ is bijective. 

\medskip \noindent 
(2) This is clear from item (2) of the theorem.
\end{proof}
 
We end the article with an example and an exercise. 

\begin{exa} \label{exa:140}
Take the number $c = 289$. Its prime factorization is 
$c = 17^2$, and $17 \in P_1$, so by Corollary \ref{cor:31} there is one 
normalized pythagorean triple with hypotenuse $289$. We can say what it quite 
easily. First we express $17$ as a sum of two squares:
$17 = 1 + 16 = 1^2 + 4^2$. We get 
$m = 1$, $n = 4$, $q = 1 + 4 \cd \i$, $\bar{q} = 1 - 4 \cd \i$ and 
\[ \ze_{17} = q / \bar{q} = q^2 / (q \cd \bar{q}) = 
(1 + 4 \cd \i) \cd (1 + 4 \cd \i) \cd \rfrac{1}{17} = 
-\rfrac{15}{17} + \rfrac{8}{17} \cd \i . \]
Next we compute
\[ \ze_{17}^2 = \bigl( -\rfrac{15}{17} + \rfrac{8}{17} \cd \i \bigr) \cd 
\bigl( -\rfrac{15}{17} + \rfrac{8}{17} \cd \i \bigr) = 
\rfrac{161}{289} - \rfrac{240}{289} \cd \i .  \]
This tells us that our normalized pythagorean triple is 
\[ \opn{pt}(\ze_{17}^2) = (161, 240, 289) . \]
We leave it to the reader to verify that this is really a pythagorean triple
(this requires a calculator). Checking for normalization is easy: $17$ does 
not divide $161$ and $240$.
\end{exa}

\begin{exer} \label{exer:140}
Find the two normalized pythagorean triples with hypotenuse $65$. 
\end{exer}

\medskip \noindent 
{\em Acknowledgments}. I wish to thank Eitan Bachmat, Moshe 
Newman, Noam Zimhoni, Moshe Kamenski, Ramin Takloo-Bighash,
Gal Alster, Dor Amzaleg and Steven Miller for their advice in preparing this 
article.

\end{document}